\title{\LARGE \bf
Efficient Input-Constrained Impulsive Optimal Control of Linear Systems with Application to Spacecraft Relative Motion
}
\author{Ethan Foss$^{1}$ and Simone D'Amico$^{1}$
\thanks{$^{1}$Department of Aeronautics and Astronautics, Stanford University, 94305, USA, 
        {\tt\small \{erfoss,damicos\}@stanford.edu}}%
}
\newtheorem{theorem}{Theorem}[section]
\newtheorem{lemma}{Lemma}
\newtheorem{remark}{Remark}
\newtheorem{problem}{Problem}
\DeclareMathOperator*{\arglocmax}{arg\,loc\,max}
\newcommand\fs@norules{\def\@fs@cfont{\bfseries}\let\@fs@capt\floatc@ruled
  \def\@fs@pre{}%
  \def\@fs@post{}%
  \def\@fs@mid{\kern3pt}%
  \let\@fs@iftopcapt\iftrue}
\begin{document}

\maketitle
\thispagestyle{empty}
\pagestyle{empty}

\begin{abstract}

This work presents a novel algorithm for impulsive optimal control of linear time-varying systems with the inclusion of input magnitude constraints. Impulsive optimal control problems, where the optimal input solution is a sum of delta functions, are typically formulated as an optimization over a normed function space subject to integral equality constraints and can be efficiently solved for linear time-varying systems in their dual formulation. In this dual setting, the problem takes the form of a semi-infinite program which is readily solvable in online scenarios for constructing maneuver plans. This work augments the approach with the inclusion of magnitude constraints on the input over time windows of interest, which is shown to preserve the impulsive nature of the optimal solution and enable efficient solution procedures via semi-infinite programming. The resulting algorithm is demonstrated on the highly relevant problem of relative motion control of spacecraft in Low Earth Orbit (LEO).

\end{abstract}

\section{Introduction}
The problem of minimizing the norm of a control input function space of a linear time-varying system with boundary conditions has received significant attention due to its practical application to many systems of interest. Such optimal control problems arise in many fields and are central to the study of spacecraft optimal control and trajectory optimization.
Despite its widespread applicability, impulsive optimal control remains challenging for many applications of interest, which may involve complex state and input constraints and require efficient and reliable solution procedures. For example, in the context of spacecraft relative motion control, for which computational resources are limited, safety is essential, and algorithmic reliability is crucial, few existing approaches are adequate. 

Approaches to solving impulsive optimal control problems are numerous, and include direct methods, such as sequential convex programming \cite{mao2019successiveconvexificationsuperlinearlyconvergent}, indirect methods, leveraging moment problems \cite{neustadt1964}, reachable set theory \cite{gilber1971}, or primer-vector theory \cite{prussing}, and other techniques, such as analytical methods \cite{chernick}. In direct methods, the dynamics and cost are discretized over a time grid of interest and solved as a finite-dimensional optimization problem. Despite widespread utility, these techniques do not explicitly exploit solution sparsity and therefore tend to introduce many optimization variables or can produce suboptimal solutions in the case of coarse gridding. 


Indirect methods, in comparison, involve formulating optimality conditions on the continuous-time problem, and include moment-based formulations \cite{neustadt1964}, reachable set theory \cite{gilber1971}, and primer-vector theory approaches \cite{lawden1963optimal}, which provide different perspectives to the problem and yet all arrive at equivalent optimality conditions. 

Early algorithmic frameworks for leveraging the optimality conditions derived from these indirect methods are proposed in Gilbert \cite{gilber1971}, which employed an early numerical algorithm based on quadratic programming \cite{barr} to efficiently solve impulsive rendezvous problems. Other approaches involve simplified techniques to solve the necessary conditions to suboptimality \cite{Carter1995} or applying mixed iterative schemes to solving the polynomial necessary and sufficient conditions of optimality \cite{Arzeliermixediterative}. In 2013, Arzelier \cite{ARZELIER2016373} connected the moment problem theorized by Neustadt \cite{neustadt1964} to a semi-infinite programming convex optimization scheme leveraging exchange methods \cite{Reemtsen1998} to produce an efficient, convergent, and reliable strategy to solving the linearized impulsive optimal control problem. This semi-infinite programming approach has served as the foundation for subsequent impulsive optimal control studies \cite{koenig2019,hunter2024} and has heritage on CubeSat missions such as VISORS \cite{guffanti2023autonomousguidancenavigationcontrol}.

Despite the practicality of these theoretical and algorithmic frameworks, the applicability of indirect methods to constrained problems remains limited. With respect to spacecraft rendezvous, a major constraint of interest involves bounding the maximum impulse that a spacecraft can execute at a given time. All satellites, such as those of the VISORS mission, have propulsion systems that naturally constrain the amount of $\Delta V$ that can be applied at a given time, necessitating the use of maneuver splitting algorithms or other heuristic approaches \cite{guffanti2023autonomousguidancenavigationcontrol} which lead to suboptimal maneuver plans or increased computational burden. More broadly, it is often practical to distribute $\Delta V$ expenditure for spacecraft relative motion irrespective of propulsion constraints to reduce the aggressiveness of maneuver plans. 

To address this theoretical and algorithmic gap, this paper introduces impulse magnitude constraints by applying inequality constraints to the integral of the input norm over specified time windows. It is shown that the resulting formulation preserves the impulsive quality of the solution and can be easily extended to semi-infinite programming techniques to yield an efficient and reliable solution procedure. The algorithm is demonstrated for several optimal control problems of interest in spacecraft relative motion control. In contrast to other formulations of input-constrained optimal control \cite{acikmese,taheri}, which are ubiquitous in aerospace literature, the formulation presented here is unique in that it preserves solution impulsivity, allowing for both improved algorithmic procedures and more desirable maneuver plans.

\section{Problem Background}

This work considers linear, time-varying systems of the form 
\begin{equation}
    \dot{\bm{x}}(t) = A(t) \bm{x}(t) + B(t) \bm{u}(t),
\end{equation}
where $\bm{x}: [t_0,t_f] \to \mathbb{R}^{n_x}, t \to \bm{x}(t)$ is the state trajectory represents the system state, $\bm{u}: [t_0,t_f] \to \mathbb{R}^{n_u}, t \to \bm{u}(t)$ is the input signal, $A: [t_0,t_f] \to \mathbb{R}^{n_x \times n_x}$ is the piecewise-continuous plant matrix, and $B: [t_0,t_f] \to \mathbb{R}^{n_x \times n_u}$ is the input matrix. Boundary conditions are enforced at fixed initial and terminal times such that
\begin{equation}
    \bm{x}(t_0) = \bm{x}_0, \ \bm{x}(t_f) = \bm{x}_f,
\end{equation}
where $\bm{x}_0 \in \mathbb{R}^{n_x}$ and $\bm{x}_f \in \mathbb{R}^{n_x}$ are fixed initial and terminal constraints. The set of input functions that satisfy the system dynamics and boundary conditions can be expressed as an equality constraint in terms of the state transition matrix via
\begin{equation}
    \bm{x}_f = \Phi(t_f,t_0) \bm{x}_0 + \int_{t_0}^{t_f} \Phi(t_f,\tau) B(\tau) \bm{u}(\tau) d\tau. \label{eq:BasicDynamics}
\end{equation}
Impulsive optimal control problems typically seek to minimize an accumulated norm of the control input, i.e.,
\begin{equation}
    J(\bm{u}(\cdot)) = \int_{t_0}^{t_f} f(\bm{u}(\tau)) d\tau, \label{eq:BasicCost}
\end{equation}
where $f: \mathbb{R}^{n_u} \to \mathbb{R}^+$ is a norm
that satisfies positive definiteness ($f(\bm{u}) \ge 0 \ \forall \bm{u} \in \mathbb{R}^{n_u}$, homogeneity ($f(\alpha \bm{u}) = \alpha f(\bm{u}) \ \forall \alpha \in \mathbb{R}^+$), and the triangle inequality ($f(\bm{u}_1) + f(\bm{u}_2) \ge f(\bm{u}_1+\bm{u}_2) \ \forall \bm{u}_1,\bm{u}_2 \in \mathbb{R}^{n_u}$).

\begin{problem}[Basic Impulsive Control]
    Find $\bm{u}(\cdot)$ to minimize $J(\bm{u}(\cdot))$ (Equation~\ref{eq:BasicCost}) such that the dynamics (Equation~\ref{eq:BasicDynamics}) are satisfied. \label{prob:BasicProblem}
\end{problem}

Problem~\ref{prob:BasicProblem} is a moment problem of the same form given in Neustadt \cite{neustadt1964}, whose optimal solution is a sum of delta functions, i.e. the input is an atomic measure. In its primal form, this problem is difficult to solve due to its infinite dimensional nature.
However, in the dual setting, the problem is conveniently reformulated as the following convex semi-infinite program
\begin{problem}[Basic Dual Problem]
\begin{equation}
    \begin{aligned}
\sup_{\bm{\lambda}_f} \quad & \bm{\lambda}_f^\intercal (\bm{x}_f - \Phi(t_f,t_0) \bm{x}_0) \\
\text{s.t.} \quad & f^\circ(B^\intercal(t) \Phi^\intercal(t_f,t) \bm{\lambda}_f ) \le 1 \quad \forall t \in [t_0,t_f]
\end{aligned}
\end{equation}
\label{prob:BasicDual}
\end{problem}
where the optimization variable $\bm{\lambda}_f$ is a Lagrange multiplier and $f^\circ(\cdot): \mathbb{R}^{n_u} \to \mathbb{R}^+$ is the dual norm of $f$, i.e. $f^\circ(\bm{y}) = \sup_{f(\bm{x})\le 1} \bm{x}^\intercal \bm{y}$. For many relevant norms, e.g., $l_p$ norms, the dual norm is analytically available ($\| \cdot \|_p^{\circ} = \| \cdot \|_q, 1/p+1/q=1$, see also \cite{koenig2019}).

In contrast to the infinite dimensional primal problem, the dual problem is an optimization problem over a finite number of variables subject to an infinite constraint space. The dual form of this problem is in fact easier to solve in this form, as Arzelier \cite{ARZELIER2016373} leveraged, through exchange methods for semi-infinite programs \cite{Reemtsen1998,Hettich2009}.

The moment problem is connected to both primer-vector theory and reachable set theory approaches through its dual form. Indeed, $\bm{\lambda}_v(t) \triangleq B^\intercal(t) \Phi^\intercal (t_f,t) \bm{\lambda}_f$ is in fact the classic primer-vector \cite{lawden1963optimal}, with the Lagrange multiplier $\bm{\lambda}_f$ being the costate at the terminal time, and the semi-infinite constraint in Problem~\ref{prob:BasicDual} is equivalent to that derived in Prussing \cite{prussing}. From the perspective of reachable set theory \cite{gilber1971}, the dual norm of the costate, $f^\circ(B^\intercal(t)\Phi^\intercal(t_f,t)\bm{\lambda}_f)$, is simply the support function of the sublevel set of the unit cost at time $t$, and $\max_{t\in [t_0,t_f]} f^\circ(B^\intercal(t)\Phi^\intercal(t_f,t)\bm{\lambda}_f)$ is nothing but the support function of the pseudostate reachable set under unit cost \cite{koenig2019}. 

\begin{problem}[Basic Maneuver Reconstruction]
    Let $\bm{\lambda}_f^*$ be the optimal solution to Problem~\ref{prob:BasicDual} and $t^* = \{t \in [t_0,t_f] \mid f^\circ(B^\intercal(t) \Phi^\intercal(t_f,t) \bm{\lambda}_f^* ) = 1\}$. The set $t^*$ represents the optimal impulse times to the system. Neustadt \cite{neustadt1964} showed that the number of impulse times is at most $n_x$, and that selecting $n_x$ components of $t^*$ recovers the optimal semi-infinite variables. From these times, the optimal control can be constructed as $\bm{u}(t) = \sum_{t_k \in t^*} \bm{v}_k \delta(t-t_k)$ where $\{ \bm{v}_k \}_{k=1}^n$ satisfies 
    \begin{equation}
        \bm{x}_f = \Phi(t_f,t_0) \bm{x}_0 + \sum_{k=1}^n \Phi(t_f,t_k) B(t_k) \bm{v}_k
    \end{equation}
\end{problem}

\section{Problem Formulation}

The novel contribution of this paper is the inclusion of input magnitude constraints on the control input. The consideration of input magnitude constraints is nontrivial, since bounded control magnitudes, as are typically enforced in aerospace literature for continuous thrust formulations \cite{acikmese,taheri}, prevents the optimal solution from concentrating mass at discrete time points. As will be shown, the formulation presented here maintains impulsivity of the input profile, which provides both algorithmic benefits and a desirable input structure for spacecraft which are often modeled as impulsively actuated.

The following modification to Problem~\ref{prob:BasicProblem} is proposed:
\begin{problem}[Magnitude Constrained Problem]
    Find $\bm{u}(\cdot)$ that solves the primal optimization problem $\mathcal{P}(\{ T_k \}_{k=1}^N) =$
    \begin{equation}
    \begin{aligned}
    \min_{\bm{u}(\cdot)} \ & \sum_{k=1}^N \int_{T_k} f_k (\bm{u}(t)) \text{d} t \\\text{s.t.} \ & \bm{x}_f = \Phi(t_f,t_0) \bm{x}_0 + \sum_{k=1}^N \int_{T_k} \Phi(t_f,t) B(t) \bm{u}(t) \text{d} t, \\
    &\int_{T_k} f_k(\bm{u}(t)) \text{d}t \le \Delta V_k \quad \forall k \in \mathbb{Z}_{1}^N,
    \end{aligned}
    \label{eq:ConstrainedOptimization}
    \end{equation}
    where $f_k(\cdot)$ are norms, $\Delta V_k$ are fixed impulse magnitude limits, and $T_k$ represent ordered control input time windows that satisfy $\bigcup_{k=1}^N T_k \subseteq [t_0,t_f], \ T_i \cap T_j = \emptyset \ \forall i \neq j.$
\label{prob:ConstrainedProblem}
\end{problem}
\subsection{Dual Formulation}
To show that Problem~\ref{prob:ConstrainedProblem} is amenable to being solved through semi-infinite programming, the dual form can be derived in a similar manner to that used for Problem~\ref{prob:BasicProblem}. The Lagrangian of Equation~\ref{eq:ConstrainedOptimization} is
\begin{equation}
\begin{gathered}
    \mathcal{L}(\bm{u}(\cdot),\bm{\lambda}_f, \bm{\sigma} ) =  \bm{\lambda}_f^\intercal (\bm{x}_f - \Phi(t_f,t_0) \bm{x}_0 ) - \sum_{k=1}^N \Delta V_k \sigma_k  \\
    + \sum_{k=1}^N \int_{T_k} (1+\sigma_k) f_k(\bm{u}(t)) - \bm{\lambda}_f^\intercal \Phi(t_f,t) B(t) \bm{u}(t) \text{d} t,
    \end{gathered}
\end{equation}
where $\bm{\lambda}_f$ and $ \bm{\sigma} = (\sigma_k)_{k=1}^N$ are Lagrange multipliers associated with the equality and inequality constraints of the primal in Problem~\ref{prob:ConstrainedProblem} respectively. The associated dual function is given by
\begin{equation}
\begin{gathered}
    g(\bm{\lambda}_f,\{ \sigma_k\}_{k=1}^N) = \inf_{\bm{u}(\cdot)} \mathcal{L}(\bm{u}(\cdot),\bm{\lambda}_f,\{ \sigma_k\}_{k=1}^N) \\
    =
     \begin{cases}
        \bm{\lambda}_f^{\intercal} (\bm{x}_f - \Phi(t_f,t_0) \bm{x}_0 ) - \sum_{k=1}^N \Delta V_k \sigma_k,  \\  \text{if} \ f_k^\circ(B^\intercal(t) \Phi^\intercal(t_f,t) \bm{\lambda}_f ) \le 1+\sigma_k \ \\ \forall t \in T_k, \forall k \in \mathbb{Z}_{1}^N; -\infty, \text{otherwise}
    \end{cases}
    \end{gathered}
\end{equation}
From this, the dual form is arrived to with
\begin{problem}[Magnitude Constrained Dual Problem]
Find $\bm{\lambda}_f, \bm{\sigma}$ such that $\mathcal{D}(\{ T_k \}_{k=1}^N) = $
\begin{equation}
    \begin{aligned}
\sup_{\bm{\lambda}_f, \bm{\sigma}\ge 0} \quad & \bm{\lambda}_f^\intercal (\bm{x}_f - \Phi(t_f,t_0) \bm{x}_0) - \sum_{k=1}^N \Delta V_k \sigma_k \\
\text{s.t.} \quad & f_k^\circ(B^\intercal(t) \Phi^\intercal(t_f,t) \bm{\lambda}_f ) \le 1 + \sigma_k \\ & \forall t \in T_k, \ \forall k \in \mathbb{Z}_{1}^N
\end{aligned}
\label{eq:ConstrainedDual}
\end{equation}
\label{prob:ConstrainedDual}
\end{problem}
Similar to Problem~\ref{prob:BasicDual}, the problem has been reduced from an optimization over an infinite function space in the primal to a convex semi-infinite optimization problem over $n_x + N$ variables. 

\begin{lemma}
    Problem~\ref{prob:ConstrainedDual} is strongly dual to the primal Problem~\ref{prob:ConstrainedProblem}, i.e. $\mathcal{D} = \mathcal{P}$.
    \label{lemma:strongduality}
\end{lemma}
\begin{proof}
    Evidently, Problem~\ref{prob:ConstrainedDual} satisfies Slater's condition for semi-infinite programs (see Definition 2.1 from \cite{shapiro}) since there is a feasible interior point, i.e. $\exists \bm{\lambda}_f, \bm{\sigma} >0, \text{s.t.} f_k^\circ(B^\intercal(t)\Phi(t_f,t) \bm{\lambda}_f ) < 1 +\sigma_k \ \forall t\in T_k, \forall k \in \mathbb{Z}_{1}^N$. From Theorem 2.3 of \cite{shapiro}, this implies that strong duality holds for the semi-infinite program. 
\end{proof}
\begin{lemma}
    The semi-infinite dual Problem~\ref{prob:ConstrainedDual} is reducible \cite{shapiro}, i.e., there exists a finite subset of times $\{t\} \triangleq \{t_1,\dots,t_M\} \subset \bigcup_{k=1}^N T_k$ such that $\mathcal{D}(\{ t \}) = \mathcal{D}(\{ T_k\}_{k=1}^N)$. 
\label{lemma:reducibility}
\end{lemma}
\begin{proof}
    Since the problem exhibits strong duality, the problem is reducible so long as an optimal solution exists (the primal is feasible) according to Theorem 3.1 of \cite{shapiro}.
\end{proof}
Lemmas~\ref{lemma:strongduality} and \ref{lemma:reducibility} extend several important properties to the proposed constrained problem. Namely, unboundedness of the dual problem implies infeasibility of the primal, the optimal solution retains impulsivity (i.e., $\bm{u}(t) = 0 $ a.e.), and exchange methods  \cite{Reemtsen1998} are valid for solving Problem~\ref{prob:ConstrainedDual}.

\subsection{Input Reconstruction}

The technique used to recover the optimal input to the unconstrained problem from the dual formulation can be similarly applied to its constrained version. If $\bm{\lambda}_f^*, \bm{\sigma}^*$ are the optimal solution variables to Problem~\ref{prob:ConstrainedDual}, then the optimal impulse times for a given window are recovered from the set
\begin{equation}
\begin{aligned}
    \{ t\}^*_k &= \{ t\in T_k \mid f_k^\circ(B^\intercal(t) \Phi^\intercal(t_f,t)\bm{\lambda}_f^* ) = 1 + \sigma_k^* \} \\
    &=\{t_{k1},t_{k2}, \dots, t_{k,m_k} \}.
\end{aligned}
\end{equation}
and the corresponding input is given by
\begin{equation}
    \bm{u}^*(t) = \sum_{k=1}^N \sum_{j=1}^{m_k} \bm{v}_{kj} \delta(t-t_{kj}).
\end{equation}
Furthermore, from the complementary slackness condition, it must hold that
\begin{equation}
    \sigma_k \left( \int_{T_k} f_k(\bm{u}(t)) \text{d}t - \Delta V_k  \right) = 0 \ \forall k \in\mathbb{Z}_{1}^N,
\end{equation}
which can be leveraged to show the interesting property that
\begin{equation}
    \sigma_k > 0 \implies \int_{T_k} f_k(\bm{u}(t))\text{d} t = \sum_{j =1}^{m_k} f_k(\bm{v}_{kj}) = \Delta V_k
\end{equation}
meaning that the non-zero elements of $\bm{\sigma}$ correspond to time windows in which the input is saturated. 

From the optimal impulse times, the optimal input can be recovered from the tractable finite LP/SOCP:
\begin{problem}[Constrained Input Reconstruction Problem]
    Solve the discretized primal Problem~\ref{prob:ConstrainedProblem} from the optimal impulse times recovered from Problem~\ref{prob:ConstrainedDual} with $\mathcal{P}_d= \mathcal{P}( \{\{ t \}_k^* \}_{k=1}^N)$.
    \label{prob:DiscretePrimal}
\end{problem}

\begin{theorem}
    A feasible optimal solution to Problem~\ref{prob:ConstrainedProblem} consists of at most $n_x+N$ impulses.
\end{theorem}
\begin{proof}
    Since strong duality holds and the dual form is a convex semi-infinite program of $n_x+N$ variables, Theorem 3.2 of \cite{shapiro} states that there exists a discretization $S$ which satisfies $|S| \le n_x +N$ such that $\mathcal{D}(S) = \mathcal{D}$, leading to at most $n_x +N$ points or atoms in time at which $\bm{u}(t)$ may be nonzero. 
\end{proof}
\begin{remark}
    The number of impulses can be further reduced when considering the addition of the $\bm{\sigma} \ge 0$ constraints, which reduces the active support of the semi-infinite constraint, leading to at most $n_x +N_a$ points in time at which the input may be nonzero, where $N_a$ is the number of strictly positive $\bm{\sigma}$. 
\end{remark}

\section{Algorithmic Approach}
Solving Problem~\ref{prob:ConstrainedProblem} in an algorithmically tractable manner can be done through first solving the dual problem using exchange methods to determine the optimal impulse locations and then reconstructing the input through the discretized primal. 
\subsection{Dual Problem Algorithm}
This work leverages an exchange method \cite{Reemtsen1998} for solving the semi-infinite program, which consists of iteratively solving a finite version of Problem~\ref{prob:ConstrainedDual} and adding constraints corresponding to the worst semi-infinite constraint violation. To illustrate this iterative technique, define the modified form of Problem~\ref{prob:ConstrainedDual}, given by
\begin{problem}
Solve the discretized form of Problem~\ref{prob:ConstrainedDual} $\mathcal{D}(S)$, where $S \triangleq \{ s_1,s_2,\dots s_d\}\subset \bigcup_{k=1}^N T_k$ is a finite set, with the addition of the constraint:
\begin{equation}
    \bm{\lambda}_f^\intercal (\bm{x}_f - \Phi(t_f,t_0) \bm{x}_0) - \sum_{k =1}^N \Delta V_k \sigma_k \le \Delta V_{tot},
\end{equation}
where $\Delta V_{tot} = \sum_{k=1}^N \Delta V_k$ is the total amount of expendable fuel imposed.
\label{prob:DiscretizedConstrainedDual}
\end{problem}
Since Problem~\ref{prob:ConstrainedDual} is reducible according to Lemma~\ref{lemma:reducibility}, then it holds that the finite version of the problem can solve the semi-infinite problem if $S$ is chosen correctly. The additional constraint in Problem~\ref{prob:DiscretizedConstrainedDual} prevents artificial infeasibility in the case that the discrete form is under constrained in the exchange method process. This constraint is natural to apply since $\mathcal{D} \le \mathcal{P} \le \Delta V_{tot}$ if a feasible point exists, meaning there is no loss in feasibility when this constraint is imposed. 

To leverage the reducibility property of the dual, exchange methods gradually add values to the set $S$ such that $\mathcal{D}(S^{(j)}) \to \mathcal{D}$ \cite{Reemtsen1998,Hettich2009}. Several prominent techniques exist in exchange methods for augmenting the discrete set $S$, which effect problem size growth and convergence rate. The technique proposed here simply evaluates the semi-infinite constraint over a grid and adds time points corresponding to the largest constraint violations.
\begin{equation}
\begin{gathered}
    S^{(j+1)} \gets S^{(j)} \cup  \arglocmax_{\scriptscriptstyle{t \in\cup_{k=1}^N T_k:f_k^\circ(\bm{\lambda}_v^{(j)}(t)) > 1+\sigma_k^{(j)}} }f_k^\circ(\bm{\lambda}_v^{(j)}(t))
    \end{gathered}
    \label{eq:ExchangeMethod}
\end{equation}
Here, $\bm{\lambda}_v^{(j)}(t) = B^\intercal(t) \Phi^\intercal(t_f,t) \bm{\lambda}_f^{(j)} $ is the primer vector at iteration $j$ of the algorithm. Since $f_k^\circ(\bm{\lambda}_v^{(j)}(t))$ is trivial to compute in this case, this process does not introduce significant computational burden in the algorithmic framework. The discrete set can be initialized with simply the initial and final times, i.e., $S^{(1)} = \{t_0,t_f\}$ or with the optimal times associated with a two-impulse burn as is done in \cite{ARZELIER2016373}. The selection of the initial times and the addition of new times are hyperparameters of the algorithmic framework which can be adjusted for specific problems, akin to that done in \cite{koenig2019}. 

These algorithmic modifications retain several important features of exchange methods. Namely, it is evident that $\mathcal{D} \le\mathcal{D}(S^{(j+1)}) \le \mathcal{D}(S^{(j)})$ since $S^{(j)} \subseteq S^{(j+1)}$. Moreover, given a tolerance $\epsilon$, an important result is as follows, which is extended from \cite{ARZELIER2016373}.
\begin{lemma}
    Suppose Problem~\ref{prob:DiscretizedConstrainedDual} is solved such that $f_k^\circ(B^\intercal(t) \Phi^\intercal(t_f,t) \bm{\lambda}_f^* ) - \sigma_k^* \le 1+\epsilon \ \forall t \in T_k, \forall k \in \mathbb{Z}_1^N$. Then it follows that $\mathcal{D}(S^{(j)}) \le (1+\epsilon) \mathcal{D}$.
    \label{lemma:epsilon}
\end{lemma}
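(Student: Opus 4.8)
The plan is to exhibit an explicit feasible point of the exact semi-infinite dual Problem~\ref{prob:ConstrainedDual} obtained by rescaling the $\epsilon$-relaxed discrete optimizer, and then invoke the fact that $\mathcal{D}$ upper-bounds the objective at any feasible point. First I would let $(\bm{\lambda}_f^*,\bm{\sigma}^*)$ denote the optimizer attaining $\mathcal{D}(S^{(j)})$, which by hypothesis satisfies the relaxed condition $f_k^\circ(B^\intercal(t)\Phi^\intercal(t_f,t)\bm{\lambda}_f^*) \le 1+\epsilon+\sigma_k^*$ over the entire windows $T_k$, not merely over $S^{(j)}\cap T_k$.

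The key step is the scaling. I would define $\tilde{\bm{\lambda}}_f \triangleq \bm{\lambda}_f^*/(1+\epsilon)$ and $\tilde{\sigma}_k \triangleq \sigma_k^*/(1+\epsilon)$, noting $\tilde{\bm{\sigma}}\ge 0$ since $\bm{\sigma}^*\ge 0$. Because each $f_k^\circ$ is a norm and hence positively homogeneous, the relaxed bound gives
\begin{equation}
f_k^\circ\!\left(B^\intercal(t)\Phi^\intercal(t_f,t)\tilde{\bm{\lambda}}_f\right) = \frac{f_k^\circ\!\left(B^\intercal(t)\Phi^\intercal(t_f,t)\bm{\lambda}_f^*\right)}{1+\epsilon} \le \frac{1+\epsilon+\sigma_k^*}{1+\epsilon} = 1+\tilde{\sigma}_k
\end{equation}
for every $t\in T_k$ and every $k\in\mathbb{Z}_1^N$, so $(\tilde{\bm{\lambda}}_f,\tilde{\bm{\sigma}})$ is exactly feasible for Problem~\ref{prob:ConstrainedDual}.

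It then remains to evaluate the objective. Since the objective is affine in $(\bm{\lambda}_f,\bm{\sigma})$ and both variables are deflated by the same factor $1/(1+\epsilon)$, its value at $(\tilde{\bm{\lambda}}_f,\tilde{\bm{\sigma}})$ equals $\mathcal{D}(S^{(j)})/(1+\epsilon)$. As $\mathcal{D}$ is the maximum of this objective over all feasible points, the feasibility established above yields $\mathcal{D}\ge \mathcal{D}(S^{(j)})/(1+\epsilon)$, and multiplying through by $(1+\epsilon)>0$ gives $\mathcal{D}(S^{(j)})\le(1+\epsilon)\mathcal{D}$.

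I expect the main, though mild, obstacle to be the feasibility verification: one must recognize that $\bm{\lambda}_f^*$ and $\bm{\sigma}^*$ have to be deflated by the \emph{same} factor, and that the $\epsilon$-relaxed termination condition is precisely what makes the homogeneity bookkeeping collapse to $1+\tilde{\sigma}_k$ rather than something larger. A secondary point worth confirming is that the discrete maximizer is actually attained — guaranteed here by the norm bound $\|\bm{\lambda}_f\|_2\le\lambda_{max}$ rendering the discrete feasible set compact — so that a concrete $(\bm{\lambda}_f^*,\bm{\sigma}^*)$ exists to rescale; the bound itself plays no role in the final inequality, since the true dual $\mathcal{D}$ carries no such constraint.
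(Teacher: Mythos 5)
Your proof is correct and takes essentially the same route as the paper's: the paper likewise defines $\bar{\bm{\lambda}}_f = \bm{\lambda}_f^*/(1+\epsilon)$, $\bar{\sigma}_k = \sigma_k^*/(1+\epsilon)$, notes this pair is feasible for Problem~\ref{prob:ConstrainedDual}, and concludes $\mathcal{D}(S^{(j)})/(1+\epsilon) \le \mathcal{D}$. Your explicit homogeneity computation and the remark on attainment of the discrete maximizer merely spell out what the paper labels ``evidently.''
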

\begin{proof}
    Let $ \bar{\bm{\lambda}}_f = \bm{\lambda}_f^*/(1+\epsilon)$ and $\bar{\sigma}_k = \sigma_k^*/(1+\epsilon)$. Evidently, $(\bar{\bm{\lambda}}_f,\bar{\bm{\sigma}})$ is a feasible point to Problem~\ref{prob:ConstrainedDual} that satisfies $\mathcal{D}(S^{(j)})/(1+\epsilon) = \bar{\bm{\lambda}}_f^\intercal (\bm{x}_f - \Phi(t_f,t_0) \bm{x}_0 ) - \sum_{k=1}^N \Delta V_k \bar{\sigma}_k \le \mathcal{D} \implies \mathcal{D}(S^{(j)}) \le (1+\epsilon) \mathcal{D}$.
\end{proof}
Lemma~\ref{lemma:epsilon} can be exploited in an algorithmic framework to ensure that the dual problem is solved to within a specified suboptimality tolerance. Algorithm~\ref{alg:Dual} summarizes the procedure for solving the dual problem.
\vspace*{-4pt}
\begin{algorithm}[H]
\begin{algorithmic}[1]
\REQUIRE{LTV Dynamics $\{\Phi(\tau_f,\tau_0), A(\tau), B(\tau) \}$, Time Intervals $\{ T_k \}_{k=1}^N$, Norms $\{ f_k(\cdot) \}_{k=1}^N$ Boundary Conditions $\{\bm{x}_0,\bm{x}_f\}$, Convergence Tolerance $\epsilon$}
\ENSURE{Optimal Cost $\mathcal{D}(S^{(j)})$, Optimal Values $\bm{\lambda}_f,\bm{\sigma}$}
\STATE{Discretize $\{T_k\}_{k=1}^N$, $S^{(1)} \gets \{ t_0, t_f \}$, $j \gets 1$}
\WHILE{$\exists k\in\mathbb{Z}_1^N,t\in T_k \ \text{s.t.} \ f^\circ_k(B^\intercal(t) \Phi^\intercal(t_f,t)\bm{\lambda}_f^{(j)} ) -\sigma_k^{(j)} > 1+\epsilon$}
\STATE{$\bm{\lambda}_f^{(j)},\bm{\sigma}^{(j)}, \mathcal{D}(S^{(j)}) \gets$Solve Problem~\ref{prob:DiscretizedConstrainedDual} with $S^{(j)}$}
\STATE{$S^{(j+1)} \gets S^{(j)}\cup \Delta S$ (Equation~\ref{eq:ExchangeMethod}), $j \gets j + 1$}
\ENDWHILE
\IF{$\mathcal{D}(S^{(j)}) = \Delta V_{tot}$} 
\RETURN{Problem Infeasible}
\ENDIF
\vspace{-.4cm}
\label{alg:Dual}
\end{algorithmic}
\caption{Dual Semi-Infinite Program Algorithm.}
\label{alg:Dual}
\end{algorithm}

\subsection{Input Reconstruction}
Reconstructing the input signal from the dual solution requires extracting a discrete set of times where the semi-infinite constraints are active and solving the discretized form of the primal problem. These aspects are summarized in Algorithm~\ref{alg:Primal}.
\vspace{-.2cm}
\begin{algorithm}[H]
\begin{algorithmic}[1]
\REQUIRE{LTV Dynamics $\{\Phi(\tau_f,\tau_0), A(\tau), B(\tau) \}$, Time Intervals $\{ T_k \}_{k=1}^N$, Norms $\{ f_k(\cdot) \}_{k=1}^N$ Boundary Conditions $\{\bm{x}_0,\bm{x}_f\}$, Dual Solution $\{\bm{\lambda}_f^*,\bm{\sigma}^* \}$}
\ENSURE{Optimal Cost $\mathcal{P}_d$, Input Signal $\bm{u}(t)$, State $\bm{x}(t)$}
\STATE{$\{ t\}^*_k = \{ t\in T_k \mid f_k^\circ(B^\intercal(t) \Phi^\intercal(t_f,t)\bm{\lambda}_f^* ) - (1 + \sigma_k^*) \in [-\epsilon,\epsilon] \}$}
\STATE{$\{\{\bm{v}_{kj}\}_{j=1}^{m_k}\}_{k=1}^N, \mathcal{P}_d\gets$ Solution to Problem~\ref{prob:DiscretePrimal}}
\STATE{$\bm{u}(t) \gets \sum_{k=1}^N \sum_{j=1}^{m_k} \bm{v}_{kj} \delta(t-t_{kj})$}
\STATE{$\bm{x}(t) \gets \Phi(t,t_0) \bm{x}_0 + \int_{t_0}^{t} \Phi(t,\tau) B(\tau) u(\tau) \text{d} \tau$}
\vspace{-.6cm}
\end{algorithmic}
\caption{Input Reconstruction Algorithm.}
\label{alg:Primal}
\end{algorithm}
Though the above procedure is the most general, it is possible to exploit further algorithmic performance in the case of smooth norm costs and constraints, in which case the direction of the input is known and the ambiguous input magnitudes can be solved through least squares.
\begin{theorem}
    Solving the infinite program Problem~\ref{prob:ConstrainedProblem}, by means of Algorithm~\ref{alg:Dual} followed by Algorithm~\ref{alg:Primal}, is globally convergent to the global minimum to within the imposed tolerance $\epsilon$.
\end{theorem}
\begin{proof}
    Since Problem~\ref{prob:DiscretizedConstrainedDual} is convex and bounded above, each subproblem of Algorithm~\ref{alg:Dual} will converge. Moreover, Algorithm~\ref{alg:Dual} will always terminate since $|S^{(j)}|$ is bounded. Finally, since $\mathcal{D}(S^*) \le \mathcal{D}(1+\epsilon)$ (Lemma~\ref{lemma:epsilon}), $\mathcal{D} = \mathcal{P}$ (Lemma~\ref{lemma:strongduality}), and $\mathcal{D}(S^*) = \mathcal{P}(S^*)$ (strong duality of SOCPs/LPs), $\mathcal{P}(S^*) \le \mathcal{P}(1+\epsilon)$.
\end{proof}

\section{Performance and Validation}

The problem formulation and corresponding algorithmic approach is validated on the problem of impulsive control for proximity operations of satellites. 
\subsection{System Definition}
Relative motion of satellites in orbits subject to single body gravitational force can be modeled as linear time-varying systems under a variety of perturbations, such as drag or spherical harmonics, through a relative orbital element state representation \cite{damico,koenigdynamics}, defined by
\begin{equation}
    \delta \textit{\textbf{\oe}} = \begin{bmatrix}
        \delta a & \delta \lambda & \delta e_x & \delta e_y & \delta i_x & \delta i_y
    \end{bmatrix}^\intercal.
\end{equation}
These states are functions of the classical Keplerian orbital elements of circular two-body motion of the target and chaser spacecraft. The linearized equations of motion under this state are of Jordan form and can be found in \cite{koenigdynamics} along with transformation matrices to RTN frame cartesian state. The ensuing examples use an orbital mean motion of $n=.00113$, consistent with LEO.

\subsection{Planar Reconfiguration Demonstration}

The first example considers a planar rendezvous problem consisting of reducing along track separation from a distance of 100 km to 10 km, i.e., $a\delta \textit{\textbf{\oe}}_0 = [10,100,0,10,0,0]^\intercal$ [km] to $a\delta \textit{\textbf{\oe}}_f = [0,10,0,0,0,0]^\intercal$ [km]. The transfer occurs over four orbits, with eight equally spaced time windows enforced over the duration of the transfer with a buffer of $1/10$th of an orbit between each window. Each time window is discretized over 1000 time points in the dual optimization scheme to provide sufficient resolution. The input is scaled such that a unitary impulse corresponds to $1.13$ [m/s] of $\Delta V$ expenditure. Moreover, the norm applied is the $l_2$ norm, such that $f_k(\cdot) = \| \cdot \|_2 \ \forall k$.

\begin{figure}[!t]
\vspace*{5pt}
\centering
\begin{minipage}[t]{0.228\linewidth}
  \vspace{0pt}
  \centering
  \subfloat{%
    \includegraphics[width=\linewidth]{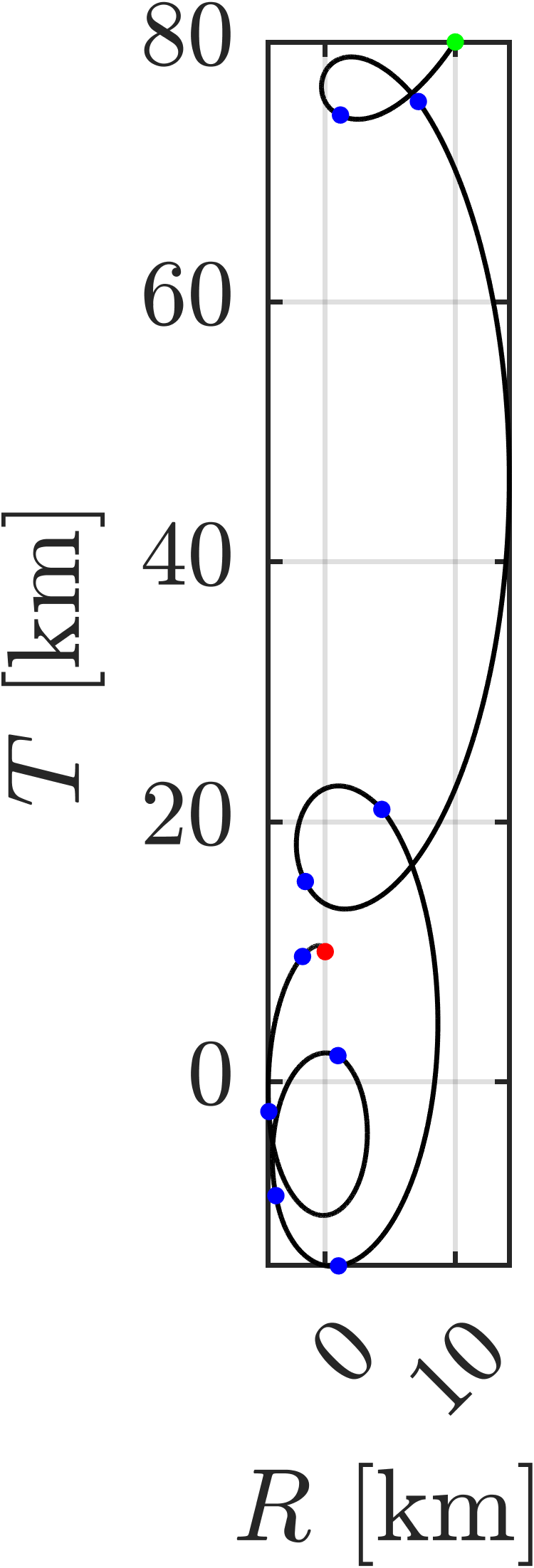}%
    \label{fig:PlanarReconfigurationRelativePosition}}%
\end{minipage}%
\hfill
\begin{minipage}[t]{0.17\linewidth}
  \vspace{3pt}
  \centering
  \subfloat{%
    \includegraphics[width=\linewidth]{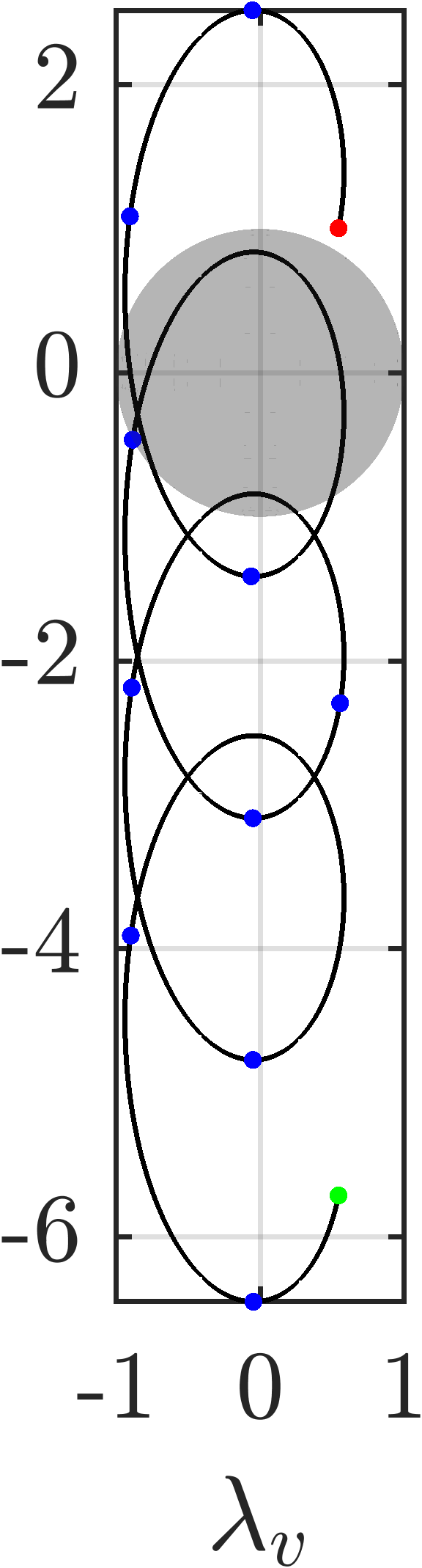}%
    \label{fig:PlanarReconfigurationLambda}}%
\end{minipage}%
\hfill
\begin{minipage}[t]{0.55\linewidth}
  \vspace{0pt}
  \subfloat{%
    \includegraphics[width=\linewidth]{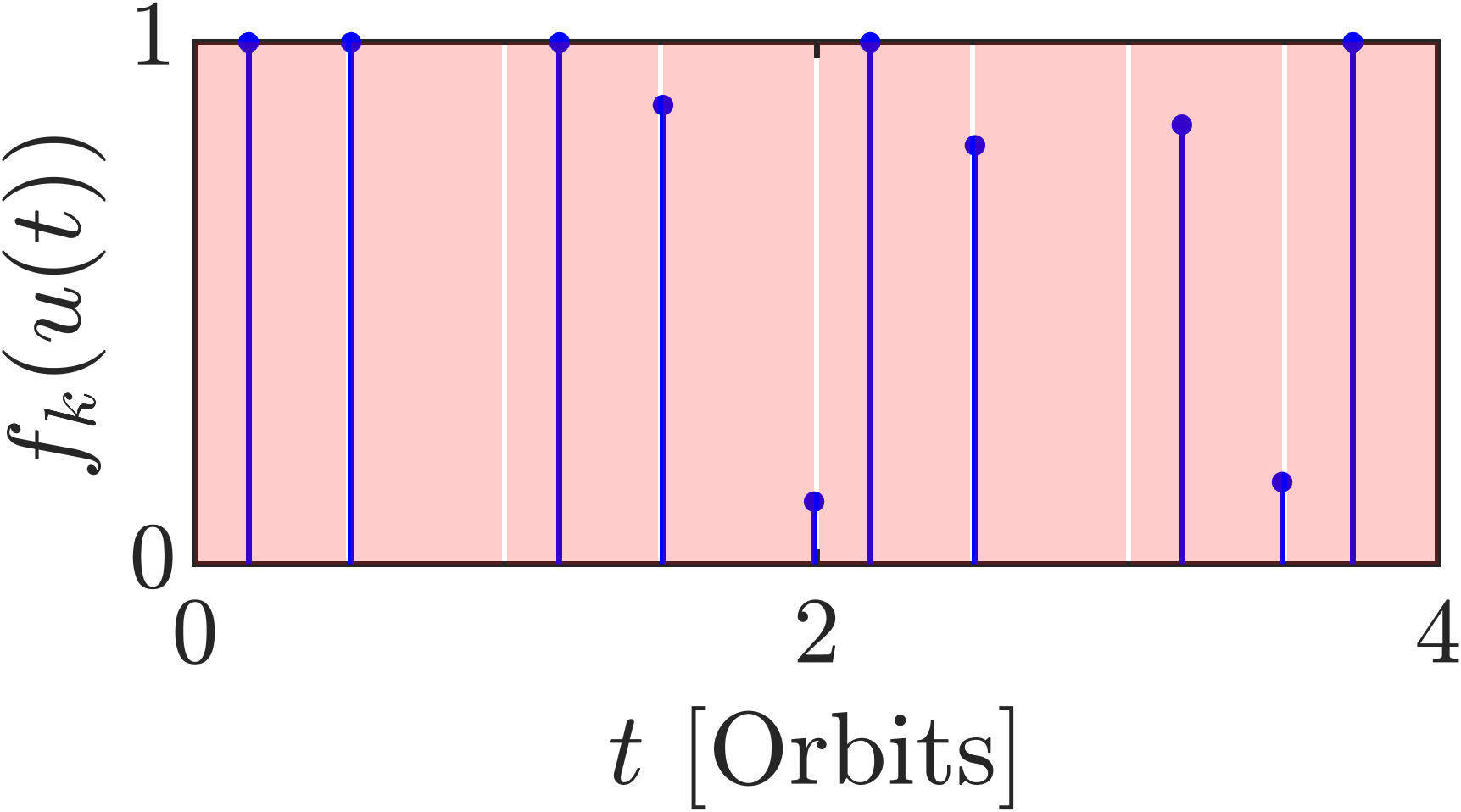}%
    \label{fig:PlanarReconfigurationThrottle}}\\
  \subfloat{%
    \includegraphics[width=\linewidth]{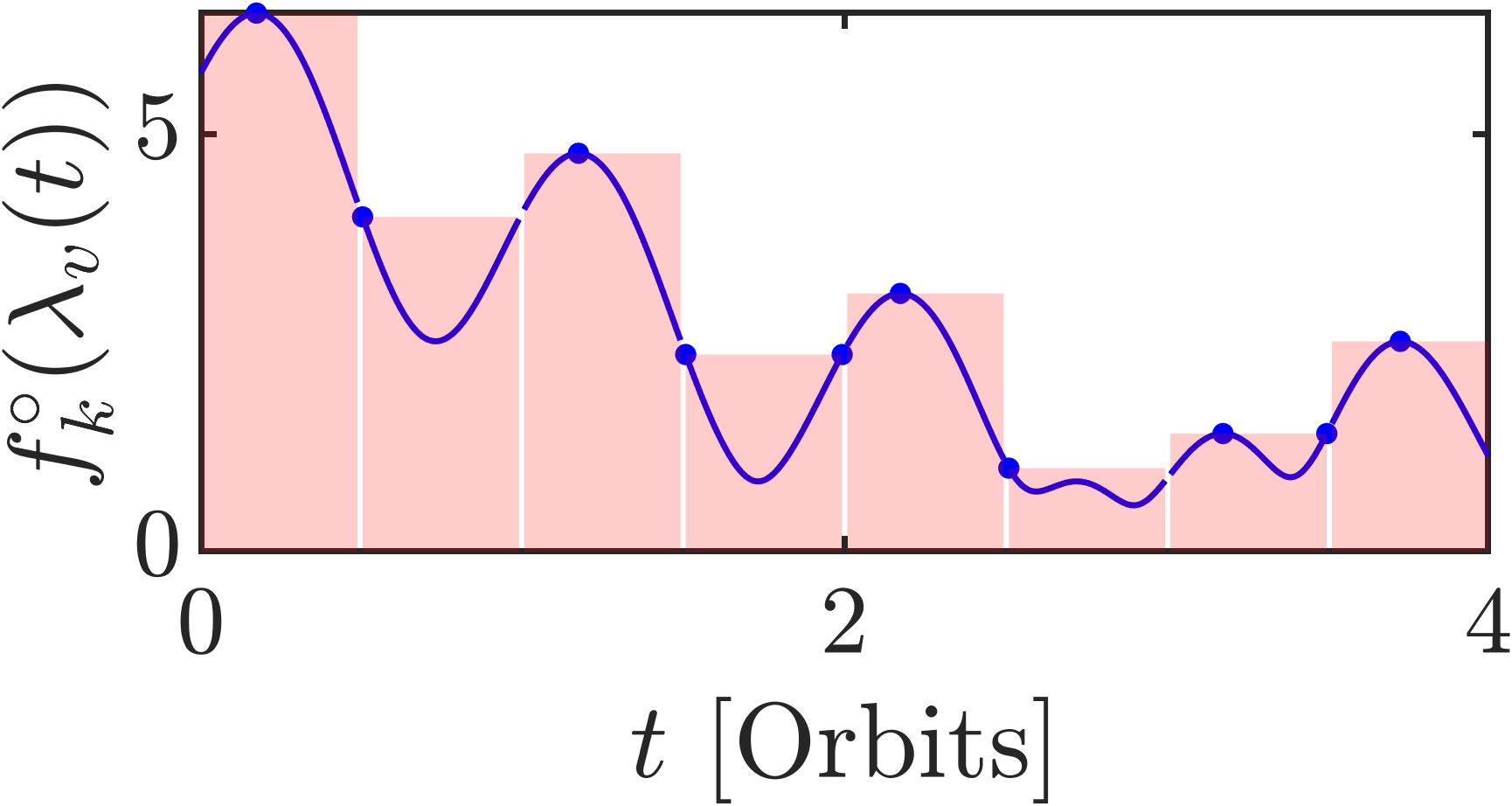}%
    \label{fig:PlanarReconfigurationDualNorm}}%
\end{minipage}

\caption{Planar Reconfiguration Trajectory.}
\label{fig:PlanarReconfiguration}
\vspace{-.6cm}
\end{figure}

\begin{figure}
    \centering
    \subfloat{%
    \includegraphics[height=3cm]{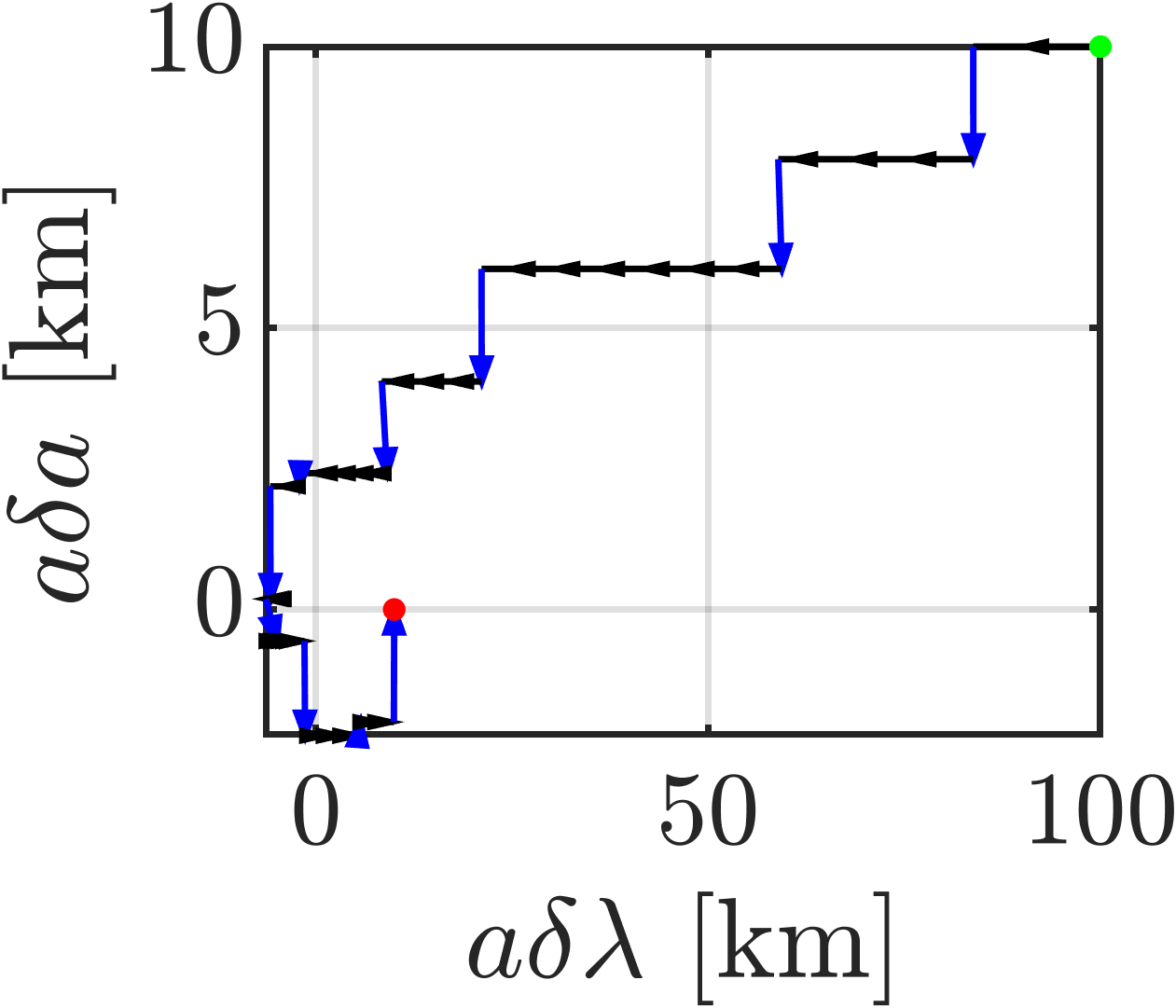}%
    \label{fig:PlanarReconfigurationda}}
    \subfloat{%
    \includegraphics[height=3cm]{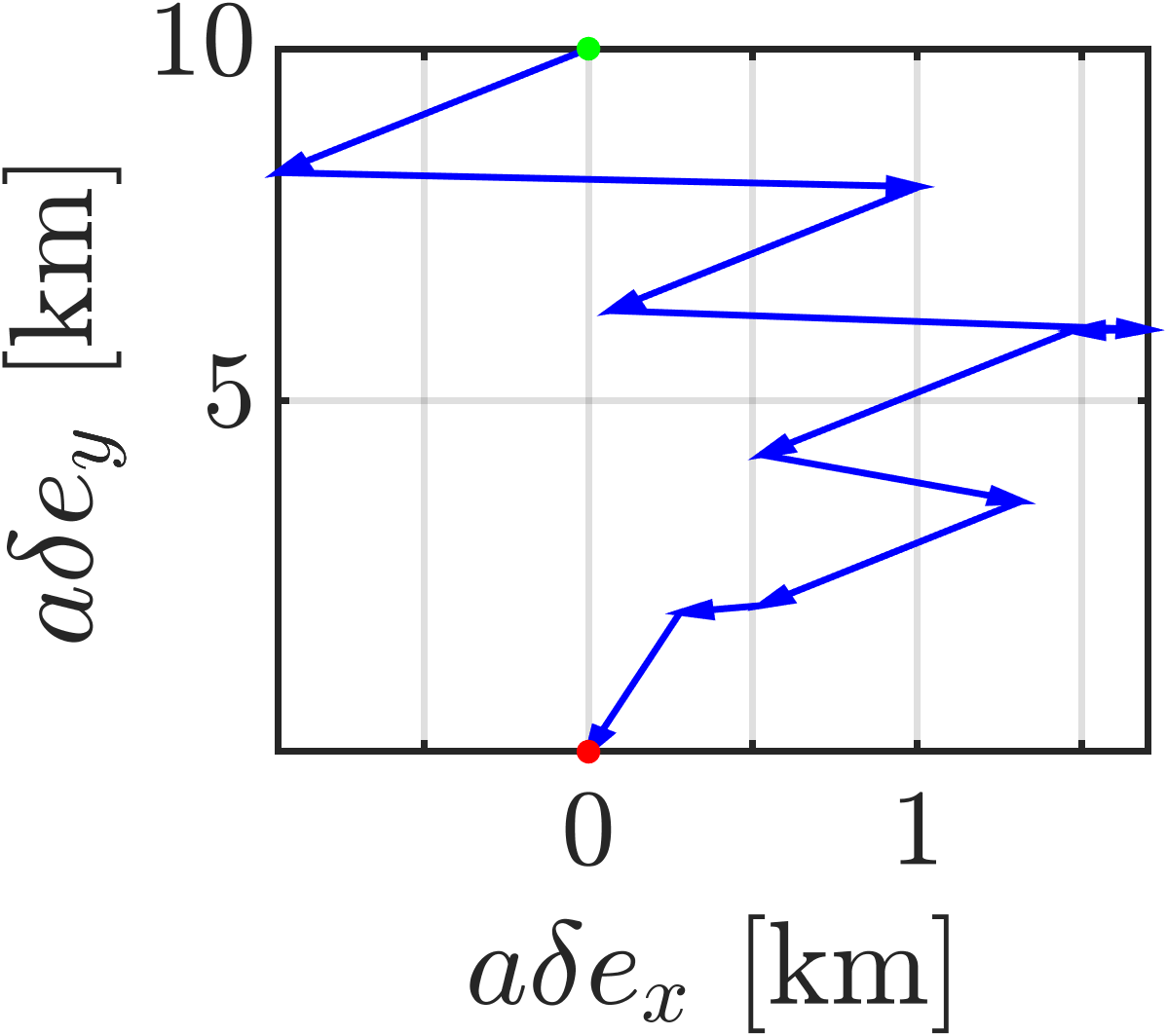}%
    \label{fig:PlanarReconfigurationde}}
    \vspace{-.2cm}
    \caption{Planar Reconfiguration Relative Orbital Elements}
    \label{fig:PlanarReconfigurationOEs}
    \vspace{-.6cm}
\end{figure}
Figure~\ref{fig:PlanarReconfiguration} displays the RT-plane relative position, primer vector trajectory, input impulse profile, and dual norm trajectory. Figure~\ref{fig:PlanarReconfigurationOEs} shows the trajectory from the perspective of the planar relative orbital elements. Blue dots/arrows denote impulse events, green dots denote starting time, and red dots denote terminal time. The computed dual optimal solution for the problem is $\bm{\lambda}_f^* = [0.7380,0.0891,-0.2359,-0.7236,0,0]^\intercal, \bm{\sigma}^* = [5.4503,3.0092,3.7714,1.3608,2.0926,0,0.4142,1.5183]^\intercal$. The total $\Delta V$ expenditure is $8.860 $ [m/s]. Notably, the dual norm exceeds the value of 1 at all but one of the time windows, corresponding exactly to the time windows at which the input constraint is active.

To validate and compare the performance of the proposed algorithm, the problem is also solved under a direct optimization framework whereby the primal problem is discretized with the inputs at each discrete node as solution variables. Under this approach, the number of solution variables increases drastically compared to the dual solution framework, with the total number of solution variables amounting to $n_u N M$, with $n_u$ being the input dimension, $N$ the number of time windows, and $M$ the discretization count per time window. In contrast, the dual SIP involves $n_x + N$ solution variables, with cone count being on the order of $\sim2N$-$3N$, depending on the problem structure. The poor scalability of direct primal optimization manifests as longer solver times, with this issue being especially apparent for long duration transfers, whereby the discretization induces substantial variable counts. 

Under this direct optimization approach, the equivalent problem is solved and produces an equivalent solution. However, the solver runtime is 0.6 seconds, whereas the cumulative solver time for the dual SIP with input reconstruction is 0.1262 seconds\footnote{Runtime experiments were performed on an Intel i7‑10750H CPU in MATLAB R2024a with the MOSEK solver \cite{mosek} and YALMIP modeler \cite{Lofberg2004}. Solver times are determined from extracting the MOSEK solver time from the YALMIP solution. }. 

\subsection{VISORS Transfer Demonstration and Comparison}

The VISORS mission \cite{guffanti2023autonomousguidancenavigationcontrol} is a two-nanosatellite formation-flying mission that aims to perform telescopic measurements of the solar corona. Part of the mission operations involves performing a long duration, impulse constrained transfer from a 200 meter radius standby orbit to a 40 meter science orbit, i.e., $a\delta \textit{\textbf{\oe}}_0 = [0,0,0,200,0,200]^\intercal$ [m] to $a\delta \textit{\textbf{\oe}}_f = [-2.62, 45.21, -34.51, 4.78, -18.72, 2.72]^\intercal$ [m]. The challenge of this transfer is compounded by the limitations of the propulsion systems aboard the nanosatellites, which can provide a maximum impulse of just 2 millimeters per second with at least 45 seconds of cool down time between each impulse. 

To demonstrate the strength of the proposed algorithm for this difficult problem, the standby to science scenario from \cite{guffanti2023autonomousguidancenavigationcontrol} is demonstrated with a transfer duration of 10 orbits and 500 equally spaced time windows are enforced with a buffer of 45 seconds and a maximum impulse expenditure of $2$ millimeters per second per window. The $l_1$ norm is applied, $f_k(\cdot) = \| \cdot \|_1 \ \forall k$, to be consistent with the fuel metric in \cite{guffanti2023autonomousguidancenavigationcontrol}. Each time window is discretized over 100 time points.


\begin{figure}[!t]
\vspace*{5pt}
\centering
\begin{minipage}[t]{0.55\linewidth}
  \vspace{0pt}
  \centering
  \subfloat{%
    \includegraphics[width=\linewidth]{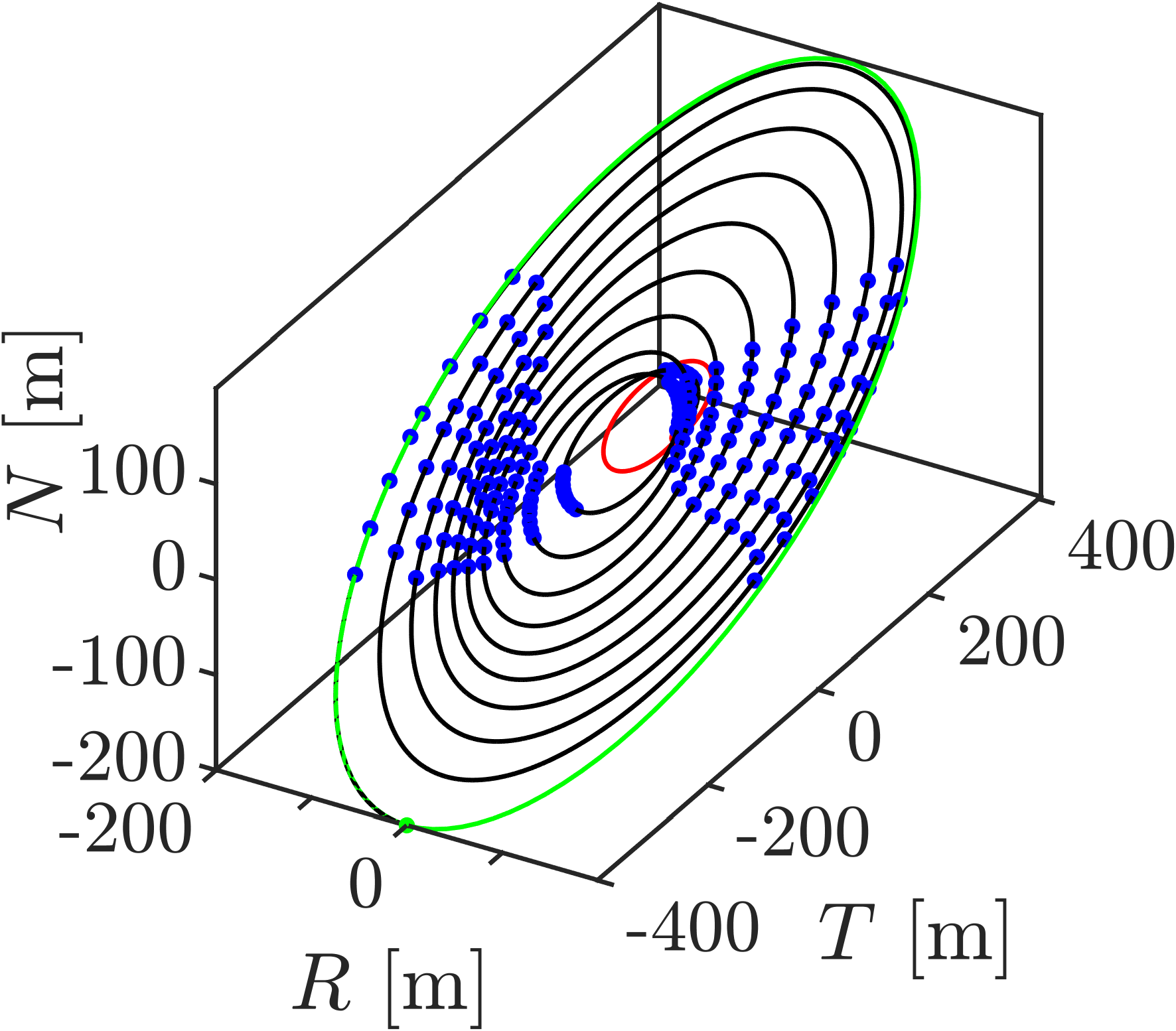}%
    \label{fig:VISORSTrajectory}}%
\end{minipage}%
\hfill
\begin{minipage}[t]{0.44\linewidth}
  \vspace{0pt}
  \raggedleft
  \subfloat{%
    \includegraphics[width=1\linewidth]{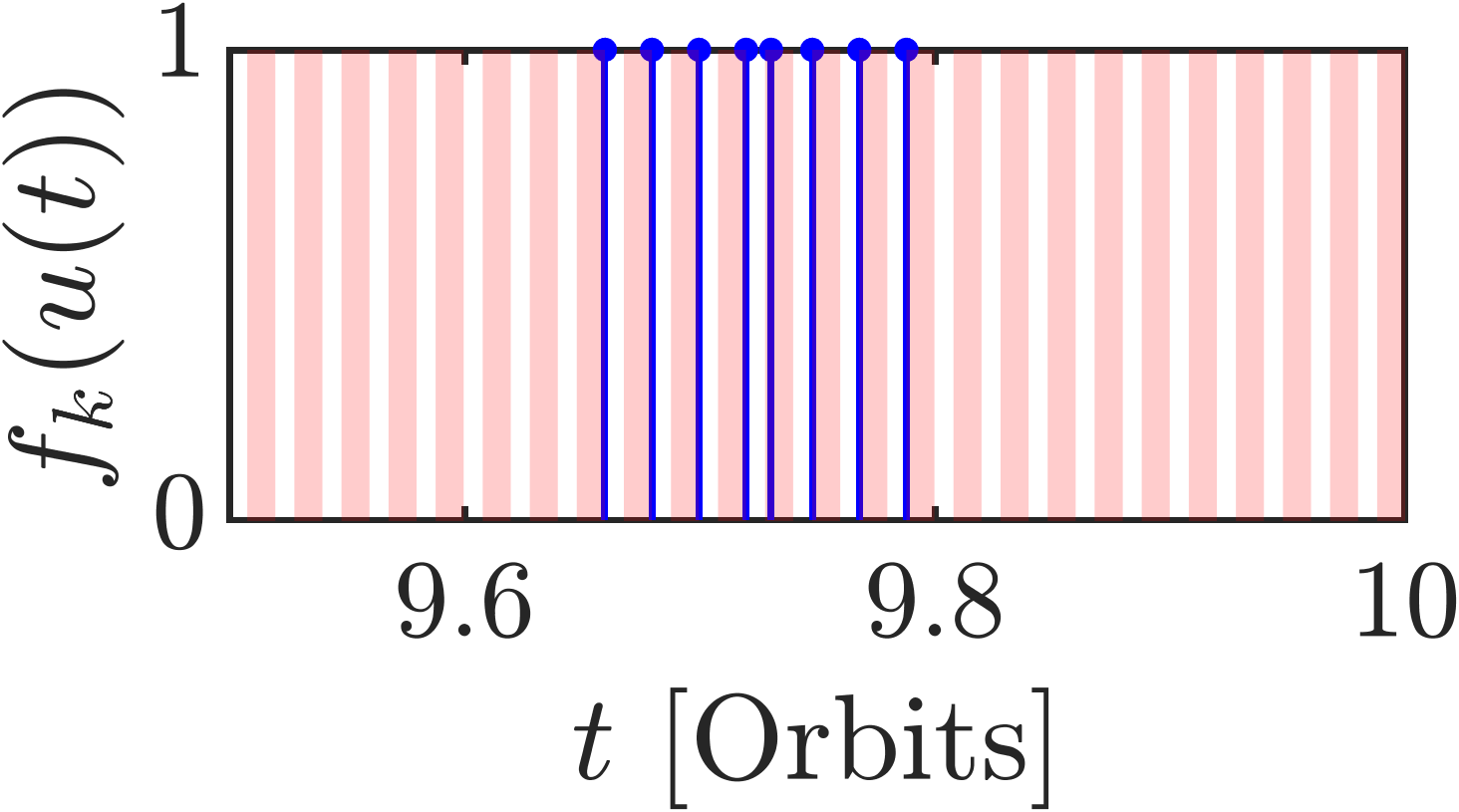}%
    \label{fig:VISORSReconfigurationThrottle}}\\
    \vspace{-8pt}
  \subfloat{%
    \includegraphics[width=1\linewidth]{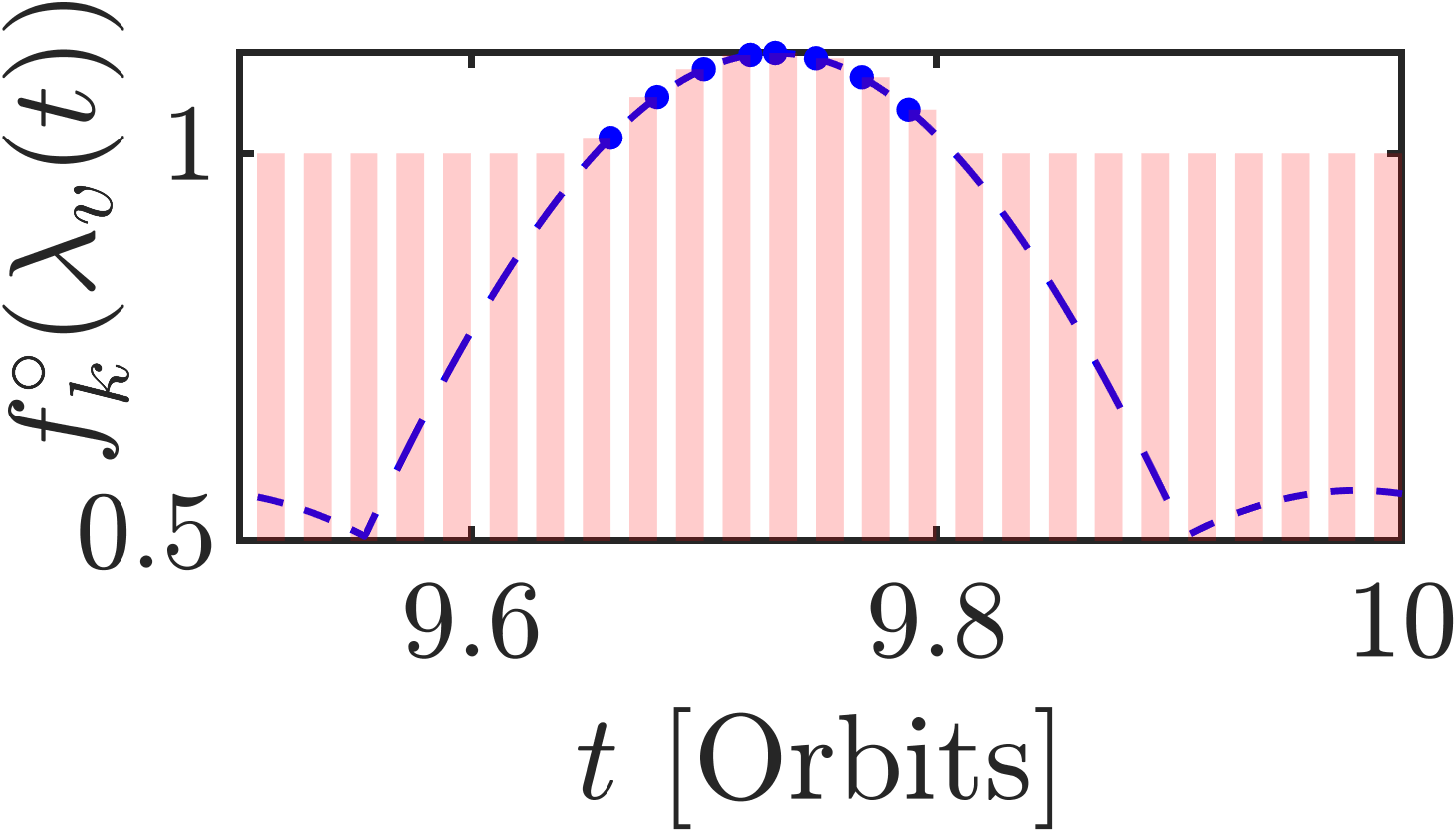}%
    \label{fig:VISORSReconfigurationDualNorm}}%
\end{minipage}

\caption{VISORS Transfer Trajectory.}
\label{fig:VISORS}
\vspace{-.6cm}
\end{figure}


\begin{figure}
    \centering
    \subfloat{%
    \includegraphics[height=2.94cm]{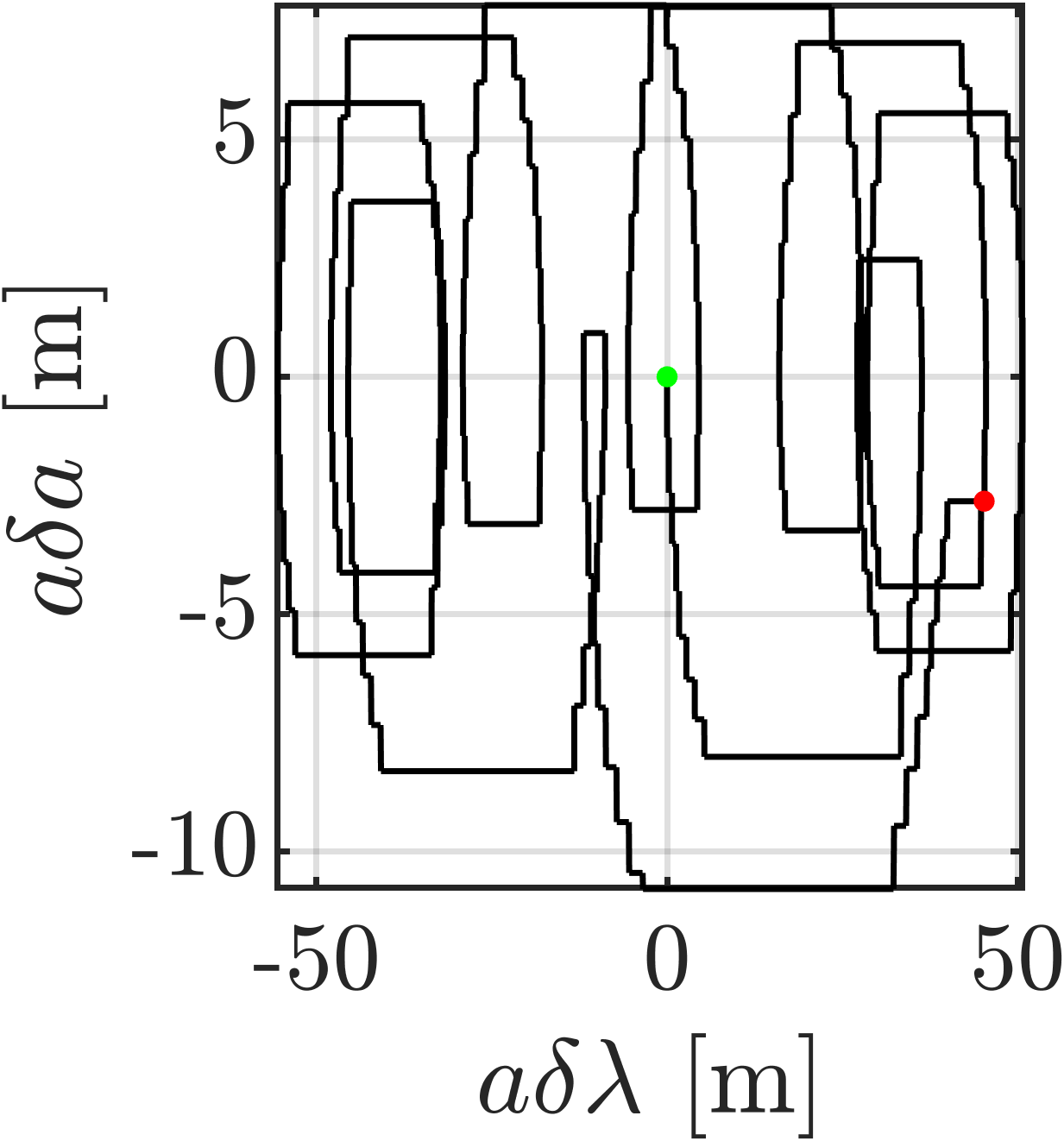}%
    \label{fig:VISORSda}}
    \hfill
    \subfloat{%
    \includegraphics[height=3cm]{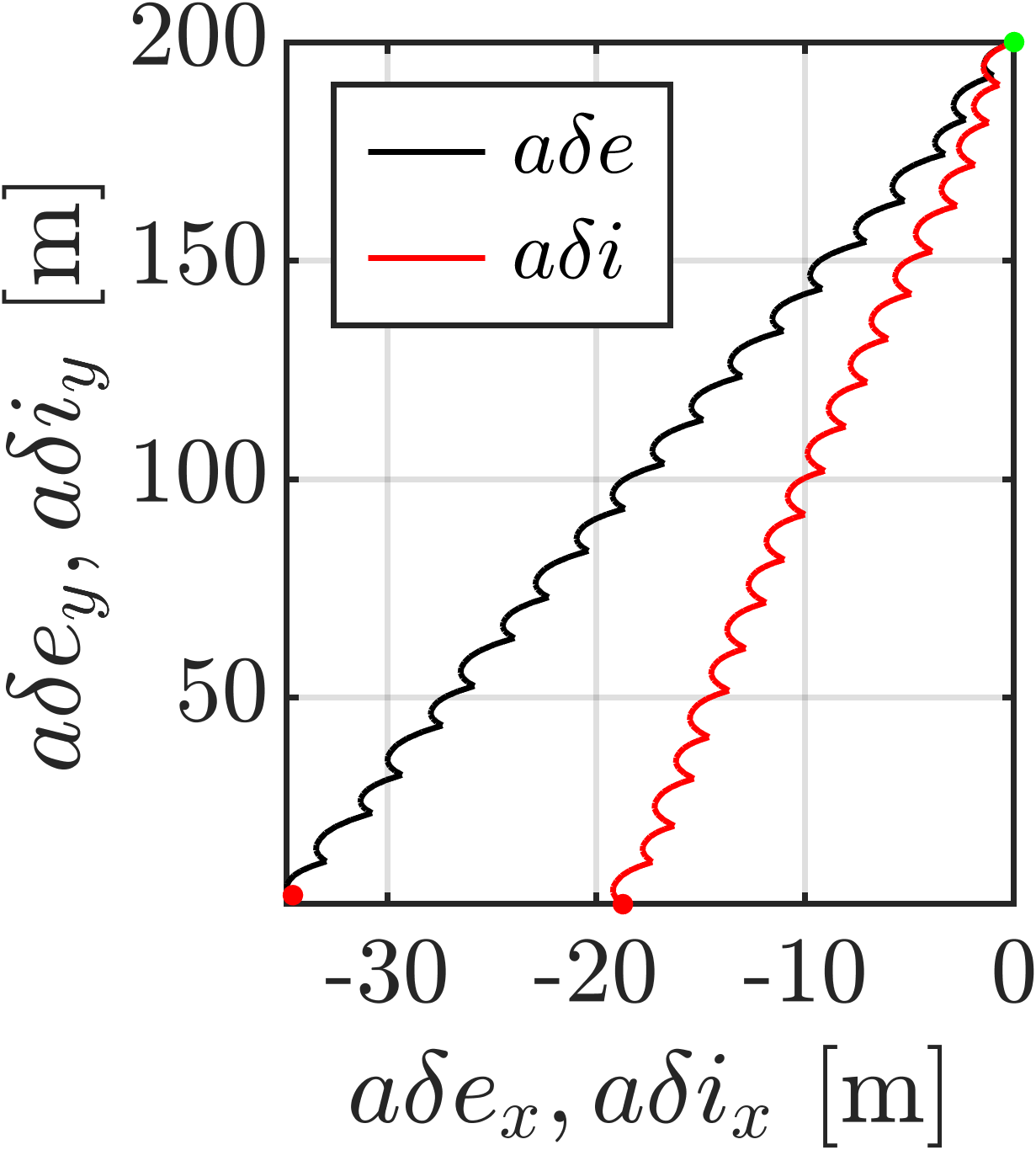}%
    \label{fig:VISORSde}}
    \hfill
    \subfloat{%
    \includegraphics[height=3cm]{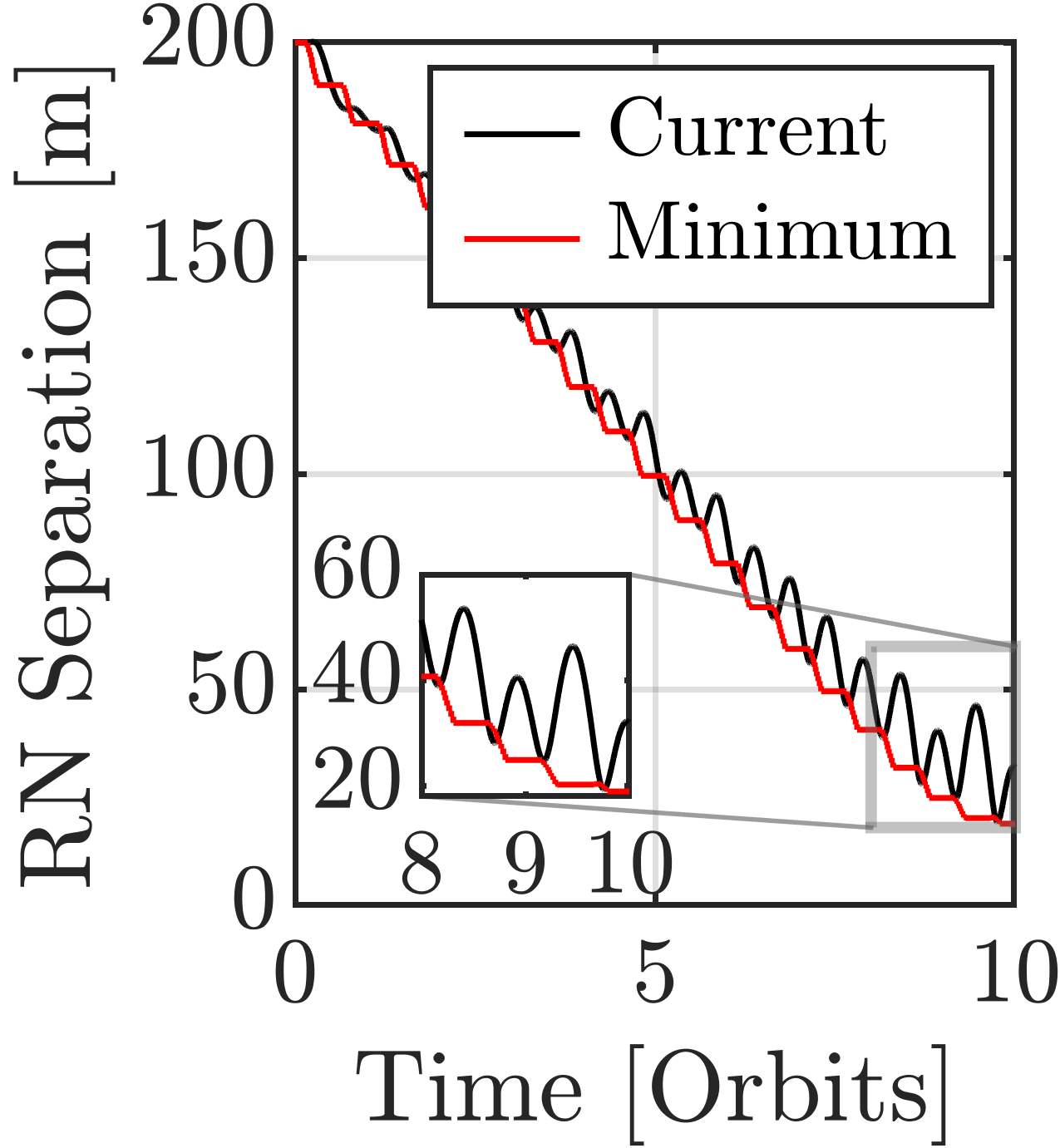}%
    \label{fig:VISORSRN}}
    \vspace{-.2cm}
    \caption{VISORS Transfer Relative Orbital Elements}
    \label{fig:VISORSROEs}
\end{figure}

The computed VISORS transfer trajectory is displayed in Figure~\ref{fig:VISORS}, including the RTN trajectory, a subset of the impulse profile, and a subset of the dual norm trajectory. Additionally, the relative orbital element trajectory is plotted in Figure~\ref{fig:VISORSROEs}, along with the RN plane separation, which validates safe spacecraft separation over the transfer duration \cite{DamicoMontenbruck}.

Finally, this scheme is embedded in a high-fidelity simulation, with the inclusion of execution errors, navigation uncertainty, and dynamical effects that are consistent with \cite{guffanti2023autonomousguidancenavigationcontrol}. The control dynamical model includes drag and $J_2$ effects, as derived in \cite{koenigdynamics}, and the transfer is replanned every orbit for the first 9 orbits and every quarter orbit during the final orbit. Table~\ref{tab:VISORSMC} summarizes the performance over 250 Monte Carlo trials. Importantly, the proposed approach results in an average of $33.27$ [cm/s] of $\Delta V$ expenditure for the transfer, which substantially improves on the $49.34$ [cm/s] metric reported in \cite{guffanti2023autonomousguidancenavigationcontrol}. Given that this transfer is performed 20 times over the mission span, this amounts to an estimated total $\Delta V$ saving of 3.19 meters per second, which is $\sim$25\% of the total $\Delta V$ budget of the VISORS CubeSats.
\begin{table}[!t]
\renewcommand{\arraystretch}{1.3} 
\caption{VISORS Transfer Performance over 250 Monte Carlo Trials with $1\sigma$ Bounds}
\label{tab:VISORSMC}
\centering
\begin{tabular}{ccccccc}
\hline
\hline
    Total $\Delta V$ & Proposed Solver Time & Direct Solver Time \\ \hline
    $33.27 \pm 0.72$ [cm/s] & $0.1912 \pm 0.0480$ [s] & $0.4234 \pm 0.0263$ [s] \\
         \hline
    \end{tabular}
    \vspace{-.5cm}
\end{table}

\section{Conclusion}

This paper demonstrated an extension to a class of moment problems that includes constraints on the integral of the norm of the input signal over time windows of interest. It was shown that the resulting optimal solution remains impulsive and that semi-infinite programming can be applied to obtain optimal solutions with guaranteed convergence. This algorithmic framework was applied to challenging spacecraft relative motion control problems, where it was used to compute maneuver sequences that outperformed established techniques in terms of optimality and runtime. 


\bstctlcite{BSTcontrol}
\bibliographystyle{IEEEtran}
\bibliography{reference}

@IEEEtranBSTCTL{BSTcontrol,
  CTLmax_names_forced_etal = "3",
  CTLnames_show_etal       = "1",
  CTLdash_repeated_names   = "no",
  CTLuse_url               = "no",
  CTLuse_forced_etal       = "yes"
}

@article{neustadt1964,
author = {Neustadt, Lucien W.},
title = {Optimization, a Moment Problem, and Nonlinear Programming},
journal = {Journal of the Society for Industrial and Applied Mathematics Series A Control},
volume = {2},
number = {1},
pages = {33-53},
year = {1964}
}

@ARTICLE{gilber1971,
  author={Gilbert, E. and Harasty, G.},
  journal={IEEE Transactions on Automatic Control}, 
  title={A class of fixed-time fuel-optimal impulsive control problems and an efficient algorithm for their solution}, 
  year={1971},
  volume={16},
  number={1},
  pages={1-11}
  }

@book{lawden1963optimal,
  title={Optimal Trajectories for Space Navigation},
  author={Lawden, D. F.},
  year={1963},
  publisher={Butterworths},
  address={London}
}

@article{ARZELIER2016373,
title = {Linearized Impulsive Fixed-Time Fuel-Optimal Space rendezvous: A New Numerical Approach},
journal = {IFAC-PapersOnLine},
volume = {49},
number = {17},
pages = {373-378},
year = {2016},
issn = {2405-8963},
author = {Denis Arzelier and Florent Bréhard and Norbert Deak and Mioara Joldes and Christophe Louembet and Aude Rondepierre and Romain Serra}
}

@ARTICLE{Carter1995,
  title     = "Linearized impulsive rendezvous problem",
  author    = "Carter, T E and Brient, J",
  abstract  = "The solution of the problem of impulsive minimization of a
               weighted sum of characteristic velocities of a spacecraft
               subject to linear equations of motion is presented without the
               use of calculus of variations or optimal control theory. The
               geometric structure of the set of boundary points associated
               with an optimal primer vector is found to be a simplex composed
               of convex conical sets. Eachk-dimensional open face of the
               simplex consists of boundary points having
               nondegeneratek-impulse solutions. This geometric structure leads
               to a simple proof that at mostn-impulses are required to solve a
               problem inn-dimensional space. This work is applied to the
               problem of planar rendezvous of a spacecraft with a satellite in
               Keplerian orbit using the Tschauner-Hempel equations of motion,
               with special emphasis on four-impulse solutions. Primer vectors
               representing four-impulse solutions are sought out and found for
               elliptical orbits, but none were found for orbits of higher
               eccentricity. For highly eccentric elliptical orbits, degenerate
               fiveimpulse solutions were found. In this situation, computer
               simulations reveal vastly different optimal trajectories having
               identical boundary conditions and cost.",
  journal   = "J. Optim. Theory Appl.",
  publisher = "Springer Science and Business Media LLC",
  volume    =  86,
  number    =  3,
  pages     = "553--584",
  month     =  sep,
  year      =  1995,
  language  = "en"
}

@ARTICLE{Arzeliermixediterative,
  title     = "A new mixed iterative algorithm to solve the fuel-optimal linear
               impulsive rendezvous problem",
  author    = "Arzelier, D and Louembet, C and Rondepierre, A and Kara-Zaitri,
               M",
  journal   = "J. Optim. Theory Appl.",
  publisher = "Springer Science and Business Media LLC",
  volume    =  159,
  number    =  1,
  pages     = "210--230",
  month     =  oct,
  year      =  2013,
  language  = "en"
}

@ARTICLE{barr,
  author={Barr, R. and Gilbert, E.},
  journal={IEEE Transactions on Automatic Control}, 
  title={Some efficient algorithms for a class of abstract optimization problems arising in optimal control}, 
  year={1969},
  volume={14},
  number={6},
  pages={640-652}}

@Inbook{Reemtsen1998,
author="Reemtsen, Rembert
and G{\"o}rner, Stephan",
editor="Reemtsen, Rembert
and R{\"u}ckmann, Jan-J.",
title="Numerical Methods for Semi-Infinite Programming: A Survey",
bookTitle="Semi-Infinite Programming",
year="1998",
publisher="Springer US",
address="Boston, MA",
pages="195--275"
}

@ARTICLE{koenig2019,
  author={Koenig, Adam W. and D'Amico, Simone},
  journal={IEEE Transactions on Automatic Control}, 
  title={Fast Algorithm for Fuel-Optimal Impulsive Control of Linear Systems With Time-Varying Cost}, 
  year={2021},
  volume={66},
  number={9},
  pages={4029-4042},
}

@INPROCEEDINGS{hunter2024,
  author={Hunter, Matthew and D'Amico, Simone},
  booktitle={2025 IEEE Aerospace Conference}, 
  title={Fast Fuel-Optimal Constrained Impulsive Control with Application to Distributed Spacecraft}, 
  year={2025},
  volume={},
  number={},
  pages={1-19},
}

@inproceedings{guffanti2023autonomousguidancenavigationcontrol,
    title={Autonomous Guidance Navigation and Control of the VISORS Formation-Flying Mission}, 
    author={Tommaso Guffanti and Toby Bell and Samuel Y. W. Low and Mason Murray-Cooper and Simone D'Amico},
    booktitle={2023 AAS/AAIA Astrodynamics Specialist Conference},
    number={AAS 23-149},
    year={2023}
}

@manual{mosek,
   author = "MOSEK ApS",
   title = "The MOSEK optimization toolbox for MATLAB manual. Version 9.0.",
   year = 2019
 }

@inproceedings{Lofberg2004,
address = {Taipei, Taiwan},
author = {L{\"{o}}fberg, J.},
booktitle = {In Proceedings of the CACSD Conference},
title = {YALMIP : A Toolbox for Modeling and Optimization in MATLAB},
year = {2004}
}

@article{prussing,
author = {Prussing, John},
year = {1995},
month = {04},
pages = {195-206},
title = {Optimal impulsive linear systems: Sufficient conditions and maximum number of impulses},
volume = {43},
journal = {Journal of the Astronautical Sciences}
}

@article{DamicoMontenbruck,
author = {D'Amico, Simone and Montenbruck, Oliver},
title = {Proximity Operations of Formation-Flying Spacecraft Using an Eccentricity/Inclination Vector Separation},
journal = {Journal of Guidance, Control, and Dynamics},
volume = {29},
number = {3},
pages = {554-563},
year = {2006}
}

@phdthesis{damico,
    author = {D'Amico, Simone} ,
    title = {Autonomous Formation Flying
in Low Earth Orbit},
    school = {TU Delft},
    year = {2010}
}

@article{chernick,
author = {Chernick, Michelle and D’Amico, Simone},
title = {New Closed-Form Solutions for Optimal Impulsive Control of Spacecraft Relative Motion},
journal = {Journal of Guidance, Control, and Dynamics},
volume = {41},
number = {2},
pages = {301-319},
year = {2018},
    abstract = { This paper addresses energy-optimal guidance and control of satellite relative motion for formation flying and rendezvous using impulsive maneuvers. To meet the requirements of future multisatellite missions, closed-form solutions of the inverse relative dynamics are sought in arbitrary orbits. Time constraints dictated by mission operations and relevant perturbations acting on the formation are accounted for by splitting the optimal reconfiguration into guidance (long-term) and control (short-term) layers. Both problems are cast in relative orbit element space, which allows the simple inclusion of secular and long-periodic perturbations through a state-transition matrix and the translation of the energy-optimal optimization into a minimum-length path-planning problem. From the choice of state variables, both guidance and control problems can be solved in closed form, leading to optimal, predictable maneuvering schemes for simple onboard implementation. Besides generalizing previous work, this paper finds five new in-plane and out-of-plane closed-form solutions to the relative motion control problem in unperturbed eccentric and perturbed near-circular orbits. A delta-v lower bound is formulated, which provides insight into the optimality of the control solutions. Finally, the functionality and performance of the new maneuvering schemes are rigorously assessed through numerical integration of the equations of motion and systematic comparison with primer vector optimal control. }
}

@article{acikmese,
author = {Acikmese, Behcet and Ploen, Scott R.},
title = {Convex Programming Approach to Powered Descent Guidance for Mars Landing},
journal = {Journal of Guidance, Control, and Dynamics},
volume = {30},
number = {5},
pages = {1353-1366},
year = {2007},
doi = {10.2514/1.27553}

}

@misc{mao2019successiveconvexificationsuperlinearlyconvergent,
      title={Successive Convexification: A Superlinearly Convergent Algorithm for Non-convex Optimal Control Problems}, 
      author={Yuanqi Mao and Michael Szmuk and Xiangru Xu and Behcet Acikmese},
      year={2019},
      eprint={1804.06539},
      archivePrefix={arXiv},
      primaryClass={math.OC}
}

@article{koenigdynamics,
author = {Koenig, Adam W. and Guffanti, Tommaso and D’Amico, Simone},
title = {New State Transition Matrices for Spacecraft Relative Motion in Perturbed Orbits},
journal = {Journal of Guidance, Control, and Dynamics},
volume = {40},
number = {7},
pages = {1749-1768},
year = {2017},
    abstract = { This paper presents new state transition matrices that model the relative motion of two spacecraft in arbitrarily eccentric orbits perturbed by J2 and differential drag for three state definitions based on relative orbital elements. Both density-model-specific and density-model-free formulations of the effects of differential drag are included. The state transition matrices are derived by first performing a Taylor expansion on the equations of relative motion and subsequently computing an exact closed-form solution of the resulting linear differential equations. The resulting state transition matrices are used to generalize the geometric interpretation of the effects of J2 and differential drag on relative motion in near-circular orbits provided in previous works to arbitrarily eccentric orbits. Additionally, this paper harmonizes current literature by demonstrating that a number of state transition matrices derived by previous authors using various techniques can be found by subjecting the models presented in this paper to more restrictive assumptions. Finally, the presented state transition matrices are validated through comparison with a high-fidelity numerical orbit propagator. It is found that these models are able to match or exceed the accuracy of comparable models in the literature over a broad range of orbit scenarios. }
}

@article{shapiro,
author = {Alexander Shapiro},
title = {Semi-infinite programming, duality, discretization and optimality conditions† },
journal = {Optimization},
volume = {58},
number = {2},
pages = {133--161},
year = {2009},
publisher = {Taylor \& Francis}
}

@article{taheri,
author = {Taheri, Ehsan and Kolmanovsky, Ilya and Atkins, Ella},
title = {Enhanced Smoothing Technique for Indirect Optimization of Minimum-Fuel Low-Thrust Trajectories},
journal = {Journal of Guidance, Control, and Dynamics},
volume = {39},
number = {11},
pages = {2500-2511},
year = {2016},
    abstract = { Indirect methods used for solving optimal-control problems, when combined with proper initialization and homotopy approaches, remain attractive for space trajectory optimization, as they are able to achieve fast convergence to a solution of the necessary conditions. In this paper, the extended logarithmic-smoothing technique is revisited and integrated with an indirect method to efficiently generate minimum-fuel time-fixed low-thrust rendezvous trajectories. This approach is considered for three cases, in which equations of motion are expressed in terms of Cartesian, spherical, and modified equinoctial coordinates. In addition, the paper addresses the calculation of the Jacobian matrix of the constraints via an implementation of the state-transition-matrix approach, which avoids the discontinuities of the control along the trajectory. The application of the method to two interplanetary missions from Earth to Mars and to asteroid Dionysus is demonstrated. It is shown that, by exploiting the state transition matrix and the homotopy method, the optimal-control problem becomes amenable to numerical treatment. The numerical results are compared in terms of the percent of converged cases, mean values for final mass, number of iterations and function evaluations, accuracy in satisfying the constraints, and computational time. }
}

@Inbook{Hettich2009,
author="Hettich, Rainer
and Kaplan, Alexander
and Tichatschke, Rainer",
editor="Floudas, Christodoulos A.
and Pardalos, Panos M.",
title="Semi-infinite Programming: Numerical Methods",
bookTitle="Encyclopedia of Optimization",
year="2009",
publisher="Springer US",
address="Boston, MA",
pages="3429--3434",
abstract="Keywords",
isbn="978-0-387-74759-0"
}

\end{document}